\DeclareMathAlphabet{\mathpzc}{OT1}{pzc}{m}{it}
\DeclareMathOperator{\vr}{VR}
\DeclareMathOperator{\tot}{Tot}
\newcommand{\scra}{\mathscr{A}}
\newcommand{\scru}{\mathscr{U}}
\newcommand{\bbn}{\mathbb{N}}
\newcommand{\bbz}{\mathbb{Z}}
\newcommand{\cali}{\mathcal{I}}
\DeclareMathOperator{\im}{im}
\newcommand{\wt}{\widetilde}
\newcommand{\p}{\partial}
\newtheorem{theorem}{Theorem}
\newtheorem{proposition}[theorem]{Proposition}
\theoremstyle{definition}
\newtheorem*{question}{Question}
\newtheorem{remark}[theorem]{Remark}
\begin{document}

\title{A Homological Nerve Theorem for Open Covers}

\author[P. Gillespie]{Patrick Gillespie}
\address{University of Tennessee\\ Department of Mathematics\\
Knoxvillle, TN 37996, USA}
\email{pgilles5@vols.utk.edu}
\date{\today}
\keywords{Vietoris complex, nerve theorem, spectral sequence, metric thickening}

\begin{abstract}
In this note we show that a particular homological nerve theorem, which was originally proved for a finite cover of a simplicial complex by subcomplexes, also holds for an open cover of an arbitrary topological space. The motivation for this is to affirmatively answer a question about the homology groups of Vietoris metric thickenings.
\end{abstract}

\maketitle

Given a cover $\scru=\{U_i\}_{i\in\cali}$ of a topological space $X$, the \textit{nerve} of $\scru$, which we denote $\mathcal{N}(\scru)$, is a simplicial complex whose vertex set is $\cali$ and whose simplices are the finite subsets $\sigma\subset\cali$ such that the intersection $\bigcap_{i\in\sigma}U_i$ is nonempty. There are many nerve theorems, each of which relate a space $X$ with $\mathcal{N}(\scru)$, but vary on the assumptions placed on $X$ and $\scru$, as well as the conclusions drawn. 

One of the earliest examples of a nerve theorem is due to Borsuk \cite{Borsuk}. Borsuk proved that if $X$ is a finite-dimensional compact metric space and $\scra$ is a finite cover of $X$ by closed subsets of $X$ such that intersection of any subset of $\scra$ is an absolute retract, then $X$ and $\mathcal{N}(\scra)$ have the same homotopy type.

Another early example of a nerve theorem is contained in the work of Leray in \cite{Leray} and \cite{Leray2}. His work implies that if $X$ is a finite simplicial complex and $\scra$ is a finite cover of $X$ by subcomplexes such that the intersection of any subset of $\scra$ has trivial cohomology, then $H^n(X)\cong H^n(\mathcal{N}(\scra))$ for all $n$. The analogous nerve theorem for homology can be found in \cite{Brown} for example, in which K. Brown notes that the theorem is "essentially due to Leray". 

A sharper form of this homological nerve theorem is proven and used by R. Meshulam \cite{Roy}, which relaxes the condition that intersections of finite subsets of the cover $\scra$ are homologically trivial, but only shows that $H_j(X)\cong H_j(\mathcal{N}(\scra))$ for $j\leq n$ for a particular $n$.

The purpose of this note is to give a proof of the following theorem, which shows that the homological nerve theorem in \cite{Roy} holds for the case where $X$ is an arbitrary topological space and $\scru$ is an open cover of $X$.

\begin{theorem}\label{nerve}
Let $\scru =\{U_i\}_{i\in \cali}$ be an open cover of a topological space $X$, and let $N$ be the nerve of this cover. Fix an integer $k\in\bbn$. If $\wt H_j(\cap_{i\in\sigma}U_i)=0$ for all $\sigma\in N^{(k)}$ and $j\in\{0,\dots, k-\dim \sigma\}$, then 
\begin{enumerate}
\item $H_j(X)\cong H_j(N)$ for all $j\in\{0, \dots, k\}$
\item if $H_{k+1}(N)\neq 0$ then $H_{k+1}(X)\neq 0$.
\end{enumerate}
\end{theorem}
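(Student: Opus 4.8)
The plan is to construct and exploit the Mayer--Vietoris spectral sequence associated to the open cover $\scru$. Concretely, I would form the double complex of singular chains
$$C_{p,q}=\bigoplus_{i_0<\cdots<i_p}S_q\Big(\bigcap_{j=0}^{p}U_{i_j}\Big),$$
with the singular boundary in the $q$-direction and the alternating-sum (\v{C}ech) differential in the $p$-direction. Filtering by $p$ yields a first-quadrant homological spectral sequence with
$$E^1_{p,q}=\bigoplus_{\dim\sigma=p}H_q\Big(\bigcap_{i\in\sigma}U_i\Big)$$
and $d^1$ equal to the \v{C}ech differential. The first task, which I expect to be the \emph{main obstacle}, is to prove that this spectral sequence abuts to $H_{p+q}(X)$. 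Since $X$ is an arbitrary space and $\cali$ may be infinite, I cannot invoke a partition of unity; instead the identification of the abutment should rest on the small-simplices theorem, namely that for any open cover the subcomplex of $S_*(X)$ spanned by singular simplices supported in some member of $\scru$ is quasi-isomorphic to $S_*(X)$ via iterated barycentric subdivision. This identifies the total complex of $C_{\bullet,\bullet}$ with a complex computing $H_*(X)$, and it is precisely the step where openness of the cover is genuinely used.

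Granting the spectral sequence, the remainder is bookkeeping driven by the hypothesis. For a simplex $\sigma$ with $\dim\sigma=p\le k$, the condition $\wt H_0(\bigcap_{i\in\sigma}U_i)=0$ forces every such intersection to be nonempty and path-connected, while $\wt H_q=0$ for $1\le q\le k-p$ gives $H_q(\bigcap_{i\in\sigma}U_i)=0$. Translating to the $E^1$ page yields the key vanishing $E^1_{p,q}=0$ whenever $q\ge 1$ and $p+q\le k$. Moreover, since all intersections indexed by simplices of dimension $\le k$ are connected, the componentwise augmentations $H_0(\bigcap_{i\in\sigma}U_i)\to\bbz$ assemble into a surjective chain map $(E^1_{\bullet,0},d^1)\to C_\bullet(N)$ that is an isomorphism in degrees $\le k$. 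A direct check that the $d^1$-boundary of a path-component class equals the simplicial boundary of its simplex then gives $E^2_{p,0}\cong H_p(N)$ for all $p\le k$, together with a surjection $E^2_{k+1,0}\twoheadrightarrow H_{k+1}(N)$.

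For part (1), I would note that in each total degree $n\le k$ the vanishing region leaves $E^1_{n,0}$ as the only possibly nonzero entry, and that every higher differential touching the bottom row in this range has its source (for degree reasons) or its target (because it lands in total degree $n-1$ with $q\ge 1$) inside the vanishing region. Hence $E^2_{n,0}=E^\infty_{n,0}$ is the sole contribution to $H_n(X)$, and combined with $E^2_{n,0}\cong H_n(N)$ this yields $H_n(X)\cong H_n(N)$ for $n\le k$.

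For part (2), the crucial observation is that the outgoing differentials $d^r\colon E^r_{k+1,0}\to E^r_{k+1-r,\,r-1}$ land in total degree $k$ with $q=r-1\ge 1$, hence inside the vanishing region, so they all vanish; incoming differentials vanish for degree reasons. Thus $E^\infty_{k+1,0}=E^2_{k+1,0}$, and as the top filtration quotient $E^\infty_{k+1,0}$ is a quotient of $H_{k+1}(X)$. Invoking the surjection $E^2_{k+1,0}\twoheadrightarrow H_{k+1}(N)$ from the second step, the assumption $H_{k+1}(N)\ne 0$ forces $E^2_{k+1,0}=E^\infty_{k+1,0}\ne 0$, and therefore $H_{k+1}(X)\ne 0$.
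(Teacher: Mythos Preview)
Your proposal is correct and follows essentially the same route as the paper: the Mayer--Vietoris double complex, its first spectral sequence with $E^1_{p,q}=\bigoplus_{\dim\sigma=p}H_q(U_\sigma)$, the augmentation chain map $E^1_{\bullet,0}\to C_\bullet(N)$ (isomorphism through degree $k$, surjection in degree $k+1$), and the same degree bookkeeping for parts (1) and (2). The one place where the paper is more explicit than your sketch is the identification of the abutment: rather than appealing directly to small simplices, the paper runs the \emph{second} spectral sequence and proves row-acyclicity of $A_{\bullet,q}$ (its Proposition~2), obtaining $H_\bullet(\mathrm{Tot}\,A)\cong H_\bullet^{\scru}(X)\cong H_\bullet(X)$, the last isomorphism being exactly the small-simplices theorem you cite.
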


In \cite{Roy}, Meshulam assumes that $X$ is a finite simplicial complex, $\scru$ is a finite cover of $X$ by subcomplexes, and takes homology to have coefficients in a field. By making minor adjustments to Meshulam's proof, namely by using the homology---and not cohomology---spectral sequence of a cover, we may drop all of these assumptions, allowing $X$ to be an arbitrary space, $\scru$ to be an open cover of $X$, and the theorem holds for homology with arbitrary coefficient groups. The motivation for considering such a generalization is to affirmatively answer a question posed by Adams, Frick, and Virk \cite{Adams}.

\begin{question}\label{q}
If $\scru$ is a uniformly bounded open cover of a separable metric space $X$, then do the Vietoris metric thickening $\mathcal{V}^m(\scru)$ and the Vietoris complex $\mathcal{V}(\scru)$ have the same homology groups?
\end{question} 

If $\scru$ is a cover of a space $X$, the Vietoris complex $\mathcal{V}(\scru)$ is the simplicial complex whose vertex set is $X$ and whose simplices are the finite subsets of $X$ contained in some element of $\scru$. If $X$ is a metric space and $\scru$ is the collection of open subsets of $X$ with diameter less than $r>0$, then $\mathcal{V}(\scru)=\vr(X;r)$ is the Vietoris-Rips complex. The Vietoris-Rips metric thickening $\vr^m(X;r)$ was introduced in \cite{Adamaszek} and later generalized by the Vietoris metric thickening $\mathcal{V}^m(\scru)$ \cite{Adams}. The Vietoris metric thickening $\mathcal{V}^m(\scru)$ has the same underlying set as $\mathcal{V}(\scru)$, but has a metric which gives it a coarser topology than that of $\mathcal{V}(\scru)$. See \cite{Adams} for a precise definition. In general, $\mathcal{V}^m(\scru)$ is not a simplicial complex.

In \cite{Adams}, for any any $n\in\bbn$, the authors construct an open cover $\wt M_\scru$ of $\mathcal{V}^m(\scru)$ that is \textit{good up to level $n$}, that is, the intersection of any collection of at most $n$ sets from $\wt M_\scru$ is either empty or contractible. The authors of \cite{Adams} remarked that the above question could potentially be answered by using these covers in a Mayer-Vietoris spectral sequence. The argument we present does just this, as we will use Theorem \ref{nerve}, which is ultimately an application of the Mayer-Vietoris spectral sequence.

\begin{proof}[Answer to question]
Write $\scru=\{U_i\}_{i\in\cali}$ and let $\wt M_\scru$ be the open cover of $\mathcal{V}^m(\scru)$ that is good up to level $n$, whose existence is guaranteed by \cite{Adams}. Then $\wt H_j(\cap_{i\in\sigma}U_i)\cong 0$ for all $\sigma\in N^{(n-1)}$ and $j\in\bbn$, in which case Theorem \ref{nerve} then implies that $H_j(\mathcal{V}^m(\scru))\cong H_j(\mathcal{N}(\wt{M}_\scru))$ for all $j\leq n-1$. It was shown in \cite{Adams} that the $n$-skeleton of $\mathcal{N}(\wt{M}_\scru)$ coincides with that of $\mathcal{N}(\scru)$, hence $H_j(\mathcal{N}(\wt{M}_\scru))\cong H_j(\mathcal{N}(\scru))\cong H_j(\mathcal{V}(\scru))$ for all $j\leq n-1$, where the second isomorphism follows from Dowker duality \cite{Dowker}. In total, we have that $H_j(\mathcal{V}^m(\scru))\cong H_j(\mathcal{V}(\scru))$ for all $j\leq n-1$. Since $n$ was arbitrary, we conclude that $\mathcal{V}^m(\scru)$ and $\mathcal{V}(\scru)$ have isomorphic homology groups.
\end{proof}

\begin{remark}
The condition that $X$ is separable was used in \cite{Adams} to allow the authors to apply a nerve theorem of Nag\'{o}rko \cite{Nagorko} to conclude that $\mathcal{V}^m(\scru)$ and $\mathcal{V}(\scru)$ have isomorphic homotopy groups. However, it was not used in our argument. Thus we have shown that if $\scru$ is a uniformly bounded open cover of a metric space $X$, then $\mathcal{V}^m(\scru)$ and $\mathcal{V}(\scru)$ have isomorphic homology groups.
\end{remark}

\section{Mayer-Vietoris spectral sequence}

We give a brief description of the spectral sequence of a cover, which is sometimes known as the Mayer-Vietoris spectral sequence. We use $S_n(X)$ to denote the group of singular $n$-chains in a space $X$, and use $C_n(K)$ to denote the group of simplicial $n$-chains in a simplicial complex $K$. For a simplicial complex $K$, we denote the $n$-skeleton of $K$ by $K^{(n)}$ and the set of $n$-simplices of $K$ by $K_n$. Fix an open cover $\scru=\{U_i\}_{i\in\cali}$ of $X$ and to simplify the notation, let $N=\mathcal{N}(\scru)$ be the nerve of $\scru$. For a simplex $\sigma\subset\cali$ in $N$, let $U_\sigma$ denote the intersection $U_\sigma=\cap_{i\in\sigma}U_i$. 

Given the open cover $\scru$ there is an associated double complex $(A, \p', \p'')$. A \textit{double complex} is a collection of modules $\{A_{p,q}\}_{p,q\in\bbz}$ along with two collections of homomorphisms
$$\p':A_{p,q}\to A_{p-1,q}\qquad \p'':A_{p,q}\to A_{p,q-1}$$
which satisfy $\p'\p'=\p''\p''=0$ and $\p'\p''=\p''\p'$. Note that some authors instead require that $\p'\p''=-\p''\p'$. To define the double complex associated to $\scru$, set $A_{p,q}=\bigoplus_{\sigma\in N_p}S_q(U_\sigma)$. The vertical differentials $\p'':A_{p,q}\to A_{p,q-1}$ are induced by the boundary maps $\p:S_q(U_\sigma)\to S_{q-1}(U_\sigma)$, and the horizontal differentials $\p':A_{p,q}\to A_{p-1,q}$ are defined as follows. 

Fix a total order on the vertices $\{v_i\}_{i\in \cali}$ of $N$ so that each simplex of $N$ has a unique representation $\sigma = [v_0, \dots, v_p]$ for which $v_0<\dots<v_p$. Then if $\sigma = [v_0, \dots, v_p]$ is a $p$-simplex with $v_0<\dots<v_p$, define $\p_j\sigma$ to be the $(p-1)$-simplex $\p_j\sigma = [v_0, \dots, \widehat{v_j}, \dots, v_p]$ in which $\widehat{v_j}$ denotes that the vertex $v_j$ is omitted. Since $U_\sigma\subset U_{\p_j\sigma}$, we have that $\p_j$ defines an inclusion 
$$ S_q(U_\sigma)\to S_q(U_{\p_j\sigma})\to\bigoplus_{\tau\in N_{p-1}}S_q(U_\tau)$$
for each $\sigma$. These inclusions induce maps $\delta_j:\bigoplus_{\sigma\in N_p}S_q(U_\sigma)\to\bigoplus_{\tau\in N_{p-1}}S_q(U_\tau)$. We then define $\p':A_{p,q}\to A_{p-1,q}$ by setting $\p'=\sum_{i=0}^p(-1)^i \delta_i$. One may check that $\p'\p'=0$ by expanding out 
$$\p'\p'=\sum_{k=0}^p\sum_{j=0}^{p-1}(-1)^{k+j}\delta_j\delta_k$$
and using the relation $\delta_j\delta_k=\delta_{k-1}\delta_j$ if $j<k$. Note that $\p'\p''=\p''\p'$.

\begin{figure}
$$\begin{tikzcd}
  & \vdots \arrow[d]                                           & \vdots \arrow[d]                                                    & \vdots \arrow[d]                                           &                 \\
0 & \bigoplus_{\sigma\in N_0}S_2(U_\sigma) \arrow[d] \arrow[l] & \bigoplus_{\sigma\in N_1}S_2(U_\sigma) \arrow[d] \arrow[l]          & \bigoplus_{\sigma\in N_2}S_2(U_\sigma) \arrow[d] \arrow[l] & \dots \arrow[l] \\
0 & \bigoplus_{\sigma\in N_0}S_1(U_\sigma) \arrow[d] \arrow[l] & \bigoplus_{\sigma\in N_1}S_1(U_\sigma) \arrow[d, "\p''"] \arrow[l]  & \bigoplus_{\sigma\in N_2}S_1(U_\sigma) \arrow[d] \arrow[l] & \dots \arrow[l] \\
0 & \bigoplus_{\sigma\in N_0}S_0(U_\sigma) \arrow[d] \arrow[l] & \bigoplus_{\sigma\in N_1}S_0(U_\sigma) \arrow[d] \arrow[l, "\p'"'] & \bigoplus_{\sigma\in N_2}S_0(U_\sigma) \arrow[d] \arrow[l] & \dots \arrow[l] \\
  & 0                                                          & 0                                                                   & 0                                                          &                
\end{tikzcd}$$
\caption{The double complex $(A, \p', \p'')$}
\end{figure}

Given the double complex $A$, we may form the total complex $\tot A$, whose degree $n$ term is $(\tot A)_n=\bigoplus_{p+q=n}A_{p,q}$. The differential $\p$ of $\tot A$ is defined by setting $\p(c)=\p'(c)+(-1)^p\p''(c)$ for $c\in A_{p,q}$, for all $p,q\in\bbn$.

There are two natural filtrations $F'$ and $F''$ of $\tot A$ which give rise to the spectral sequences $E'$ and $E''$ respectively. The first filtration is given by $F_k'(\tot A)_n=\bigoplus_{p\leq k}A_{p,n-p}$ and the second filtration is $F_k''(\tot A)_n=\bigoplus_{q\leq k}A_{n-q,q}$. For details on the spectral sequence associated to a filtered chain complex, see \cite{Saunders}. Since $A$ is a first quadrant double complex, the spectral sequences $E'$ and $E''$ both converge to filtrations of $H_\bullet(\tot A)$. We will use the second spectral sequence to show that $H_\bullet(\tot A)\cong H_\bullet(X)$, which we then compare with the first spectral sequence to prove \Cref{nerve}.

Let $E=E''$ be the second spectral sequence of the double complex $A$. The terms of the $E^1$ page are obtained by taking the homology of $A$ with respect to $\p'$. To describe the $E^1$ page explicitly, we need the following proposition.

\begin{proposition}\label{cech}
Let $q\in\bbn$ and let $A_{\bullet,q}$ denote the chain complex
$$\begin{tikzcd}
\dots \arrow[r] & {\bigoplus_{\sigma\in N_2}S_q(U_\sigma)} \arrow[r, "\p'"] & {\bigoplus_{\sigma\in N_1}S_q(U_\sigma)} \arrow[r, "\p'"] & {\bigoplus_{\sigma\in N_0}S_q(U_\sigma)} \arrow[r] & 0.
\end{tikzcd}$$
Then $H_j(A_{\bullet, q})\cong 0$ for all $j>0$ and $H_0(A_{\bullet,q})\cong S^\scru_q(X)$.
\end{proposition}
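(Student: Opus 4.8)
The plan is to prove the equivalent statement that augmenting $A_{\bullet,q}$ by the group of $\scru$-small singular chains produces an exact complex. Let $S^\scru_q(X)\subseteq S_q(X)$ be the subgroup generated by those singular $q$-simplices whose image lies in some element of $\scru$, and let $\epsilon\colon\bigoplus_{\sigma\in N_0}S_q(U_\sigma)\to S^\scru_q(X)$ be the map induced by the inclusions $U_\sigma\hookrightarrow X$ for the vertices $\sigma\in N_0$. This $\epsilon$ is surjective, since every generator $T$ of $S^\scru_q(X)$ has image in some $U_i$ and hence lies in the summand $S_q(U_{\{i\}})$. The Proposition is therefore equivalent to the exactness of
$$\cdots\longrightarrow\bigoplus_{\sigma\in N_1}S_q(U_\sigma)\xrightarrow{\ \p'\ }\bigoplus_{\sigma\in N_0}S_q(U_\sigma)\xrightarrow{\ \epsilon\ }S^\scru_q(X)\longrightarrow 0,$$
as exactness at $\bigoplus_{\sigma\in N_0}S_q(U_\sigma)$ together with surjectivity of $\epsilon$ yields $H_0(A_{\bullet,q})\cong S^\scru_q(X)$, while exactness at the remaining terms gives $H_j(A_{\bullet,q})\cong 0$ for $j>0$.

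The second step is to split this augmented complex as a direct sum indexed by the $\scru$-small singular $q$-simplices. For such a simplex $T\colon\Delta^q\to X$, put $\sigma_T=\{i\in\cali:\im T\subseteq U_i\}$, a nonempty subset of $\cali$. Because $S_q(U_\sigma)$ is free on the singular $q$-simplices with image in $U_\sigma$, the simplex $T$ contributes a copy of the coefficient group to the $\sigma$-summand of $A_{p,q}$ exactly when $\sigma\subseteq\sigma_T$. The key observation is that every finite nonempty subset $\sigma\subseteq\sigma_T$ satisfies $\im T\subseteq U_\sigma\neq\emptyset$, so $\sigma$ is automatically a simplex of $N$; thus the $\sigma\in N_p$ with $\sigma\subseteq\sigma_T$ are precisely the $(p+1)$-element subsets of $\sigma_T$. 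Since both $\p'$ and $\epsilon$ fix the singular simplex $T$ and only act on the index $\sigma$, the entire augmented complex decomposes as $\bigoplus_T C(T)$, where $C(T)$ is the augmented simplicial chain complex (with the induced coefficients) of the full simplex on the vertex set $\sigma_T$.

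It then suffices to recall that the augmented simplicial chain complex of a full simplex on a nonempty vertex set is exact. Fixing any $v_0\in\sigma_T$, adjoining $v_0$ to a finite subset of $\sigma_T$ again gives a finite subset, so the full simplex on $\sigma_T$ is a cone with apex $v_0$; the standard cone construction then provides an explicit chain contraction of $C(T)$. Summing over all $\scru$-small $T$ gives the required exactness, and hence the Proposition.

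The step demanding the most care is verifying that the direct-sum decomposition is compatible with the differential. This rests on the containment $U_\sigma\subseteq U_{\p_j\sigma}$, which guarantees that each face operator $\delta_j$ maps the $T$-summand into itself, combined with bookkeeping of the alternating signs in $\p'=\sum_{j=0}^{p}(-1)^j\delta_j$ so that $C(T)$ matches the simplicial boundary on the full simplex. A related subtlety is that $\sigma_T$ can be infinite when $\scru$ is infinite; this is exactly why I would establish acyclicity through the explicit cone chain contraction, which is insensitive to the cardinality of $\sigma_T$, rather than invoking finiteness or the contractibility of a geometric realization.
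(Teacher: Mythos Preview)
Your proof is correct and takes essentially the same approach as the paper: both reindex $A_{\bullet,q}$ by singular $q$-simplices $T$ (the paper writes $f$), identify each summand with the simplicial chain complex of the full simplex on $\sigma_T=\{i:\im T\subseteq U_i\}$ (the paper's $N^f$), and invoke its acyclicity. Your augmented-complex framing and the explicit cone contraction handling infinite $\sigma_T$ are minor presentational refinements of the same argument.
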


Here $S^\scru_q(X)$ denotes the group of singular $q$-chains in $X$ whose elements $\sum_{i=0}^m n_i\sigma_i$ satisfy the condition that each singular simplex $\sigma_i$ has image in an element of $\scru$.

To prove \Cref{cech}, we provide a straightforward generalization of the proof in \cite[pg. 166]{Brown}, which assumes that $X$ is a $CW$-complex and $\scru$ is a cover of $X$ by subcomplexes. A similar proposition, but for cohomology, is proved by Frigerio and Maffei in \cite{Frigerio}. A more general version of \Cref{cech} is also proved by N. Ivanov \cite{Ivanov}.

\begin{proof}
We prove the proposition by giving an alternative characterization of the groups $\bigoplus_{\sigma\in N_p}S_q(U_\sigma)$. We begin by noting that $\bigoplus_{\sigma\in N_p}S_q(U_\sigma)$ has a basis $B$ consisting of pairs $(\sigma, f)$ where $\sigma$ is a $p$-simplex of $N$ and $f$ is a map $f:\Delta^q\to U_\sigma$. For any map $f:\Delta^q\to X$, let $N^f$ be the subcomplex of $N$ consisting of simplices $\sigma$ such that $\im(f)\subset U_\sigma$. Then there is a bijection between $B$ and the set of pairs $(f,\sigma)$ where $f$ is an arbitrary map $f:\Delta^q\to X$ and $\sigma$ is a $p$-simplex of $N^f$. This is to say that for each $p$, there exists an isomorphism 
$$\bigoplus_{\sigma\in N_p}S_q(U_\sigma)\cong \bigoplus_{f:\Delta^q\to X}C_p(N^f).$$
Moreover, these isomorphisms define an isomorphism of chain complexes
$$\begin{tikzcd}[column sep=small]
\dots \arrow[r] & \bigoplus_{\sigma\in N_2}S_q(U_\sigma) \arrow[r] \arrow[d, "\cong"] & \bigoplus_{\sigma\in N_1}S_q(U_\sigma) \arrow[r] \arrow[d, "\cong"] & \bigoplus_{\sigma\in N_0}S_q(U_\sigma) \arrow[r] \arrow[d, "\cong"] & 0 \\
\dots \arrow[r] & \bigoplus_{f:\Delta^q\to X}C_2(N^f) \arrow[r]                       & \bigoplus_{f:\Delta^q\to X}C_1(N^f) \arrow[r]                       & \bigoplus_{f:\Delta^q\to X}C_0(N^f) \arrow[r]                       & 0
\end{tikzcd}$$
where the differential in the bottom complex is induced by the boundary maps $C_p(N^f)\to C_{p-1}(N^f)$ on simplicial $p$-chains. Observe that for each $f:\Delta^q\to X$, the complex $N^f$ consists of all finite subsets of the set $\{i\in\cali: \im(f)\subset U_i\}$. Hence $N^f$ is either empty or contractible. Hence $H_p(N^f)\cong 0$ for all $p>0$ and the homology groups of the bottom chain complex (and hence the top as well) are zero at each position except $\bigoplus_{f:\Delta^q\to X}C_0(N^f)$. Here we note that $H_0(N_f)$ is either $0$ or $\bbz$, depending on whether $N_f$ is empty or not, which in turn depends on whether $f$ has image in some element of $\scru$. Thus we can take the set of maps $f:\Delta^q\to X$ which have image in some element of $\scru$ to be a basis for $\bigoplus_{f:\Delta^q\to X}H_0(N_f)$. This implies that $\bigoplus_{f:\Delta^q\to X}H_0(N_f)\cong S^\scru(X)$, completing the proof.
\end{proof}

Using \Cref{cech}, we see that the $E^1$ page of the second spectral sequence is of the form
$$\begin{tikzcd}[sep=small]
  & \vdots \arrow[d]       & \vdots \arrow[d] & \vdots \arrow[d] &       \\
0 & S^\scru_2(X) \arrow[d] & 0 \arrow[d]      & 0 \arrow[d]      & \dots \\
0 & S^\scru_1(X) \arrow[d] & 0 \arrow[d]      & 0 \arrow[d]      & \dots \\
0 & S^\scru_0(X) \arrow[d] & 0 \arrow[d]      & 0 \arrow[d]      & \dots \\
  & 0                      & 0                & 0               &      
\end{tikzcd}$$
and where the differentials are induced by $\p''$. Hence the second spectral sequence collapses at the $E^2$ page. Let $H^\scru_q(X)$ denote the $q$-th homology group of the chain complex $S^\scru_\bullet(X)$, which we note is isomorphic to $H_q(X)$ since $\scru$ is an open cover. Then we see that $E^2_{0,q}\cong H_q^\scru(X)\cong H_q(X)$ for all $q\in\bbn$ and $E^2_{p,q}\cong 0$ if $p>0$. Hence the homology of the total complex of $A$ is isomorphic to the homology of $X$, i.e. $H_\bullet(\tot A)\cong H_\bullet(X)$.

\section{Homological nerve theorem}

We are now ready to prove \Cref{nerve}. We remind the reader that our proof is simply a modification of the proof in \cite{Roy}, in which we use the homology spectral sequence of the cover $\scru$, rather than the cohomology spectral sequence.

\begin{proof}[Proof of Theorem \ref{nerve}]
Given an open cover $\scru$ of a space $X$, let $N$ be the nerve of $\scru$, let $A$ be the double complex associated to $\scru$, and let $E=E'$ be the first spectral sequence of the double complex $A$. The terms of the $E^1$ page of the first spectral sequence are obtained by taking the homology of $A$ with respect to $\p''$. Hence $E^1$ is
$$\begin{tikzcd}[sep=small]
  & \vdots                                           & \vdots                                           & \vdots                                           &                 \\
0 & \bigoplus_{\sigma\in N_0}H_2(U_\sigma) \arrow[l] & \bigoplus_{\sigma\in N_1}H_2(U_\sigma) \arrow[l] & \bigoplus_{\sigma\in N_2}H_2(U_\sigma) \arrow[l] & \dots \arrow[l] \\
0 & \bigoplus_{\sigma\in N_0}H_1(U_\sigma) \arrow[l] & \bigoplus_{\sigma\in N_1}H_1(U_\sigma) \arrow[l] & \bigoplus_{\sigma\in N_2}H_1(U_\sigma) \arrow[l] & \dots \arrow[l] \\
0 & \bigoplus_{\sigma\in N_0}H_0(U_\sigma) \arrow[l] & \bigoplus_{\sigma\in N_1}H_0(U_\sigma) \arrow[l] & \bigoplus_{\sigma\in N_2}H_0(U_\sigma) \arrow[l] & \dots \arrow[l] \\
  & 0                                                & 0                                                & 0,                                                &                
\end{tikzcd}$$
where the differentials are induced by $\p'$. For each $m\in\bbn$, there exists a surjective map $g_m:E^1_{m,0}\to C_m(N)$ defined as follows. For each $\sigma \in N$, let $P_\sigma$ denote the set of path components of $U_\sigma$, identify $H_0(U_\sigma)$ with $\bigoplus_{i\in P_\sigma}\bbz$, and let $f_\sigma:H_0(U_\sigma)\to \bbz$ be the map which sends $(n_i)_{i\in P_\sigma}$ to the sum $\sum_{i\in P_\sigma}n_i$. Then for each $m\in\bbn$, let $g_m:\bigoplus_{\sigma\in N_m}H_0(U_\sigma)\to \bigoplus_{\sigma \in N_m}\bbz$ be the map induced by the collection $\{f_\sigma:\sigma\in N_m\}$. It is not too difficult to check that the collection $\{g_m:m\in\bbn\}$ defines a morphism of chain complexes, $g:E^1_{\bullet,0}\to C_\bullet(N)$.

Under the assumption that $\wt H_j(U_\sigma)\cong 0$ for all $\sigma\in N^{(k)}$ and $j\in\{0,\dots, k-\dim\sigma\}$, we see that for all $m\leq k$, the $m$-th antidiagonal of the $E^1$ page contains only one nontrivial term, $\bigoplus_{\sigma\in N_m}H_0(U_\sigma)$. Moreover, $g_m:\bigoplus_{\sigma\in N_m}H_0(U_\sigma)\to C_m(N)$ is an isomorphism for $m\leq k$. Then from the commutative diagram
$$\begin{tikzcd}
{E^1_{k+2,0}} \arrow[d, "g_{k+2}"] \arrow[r] & {E^1_{k+1,0}} \arrow[d, "g_{k+1}"] \arrow[r] & {E^1_{k,0}} \arrow[d, "\cong"] \arrow[r] & {E^1_{k-1,0}} \arrow[d, "\cong"] \arrow[r] & \dots \\
C_{k+2}(N) \arrow[r]                            & C_{k+1}(N) \arrow[r]                            & C_{k}(N) \arrow[r]                          & C_{k-1}(N) \arrow[r]                          & \dots
\end{tikzcd}$$
we immediately see that $E^2_{m,0}\cong H_m(N)$ for all $m\leq k-1$. Using the fact that $g_{k+1}$ is surjective and $g_k, g_{k-1}$ are isomorphisms, it is also straightforward to see that $g_k$ induces an isomorphism $E^2_{k,0}\cong H_k(N)$ and $g_{k+1}$ induces a surjection $E^2_{k+1, 0}\to H_{k+1}(N)$.
Note that for $m\leq k$, the $m$-th antidiagonal of the $E^2$ page contains only one nontrivial term, $E^2_{m,0}$, and that $E^2_{p,q}\cong E^\infty_{p,q}$ for $p+q\leq k$. Hence for $0\leq m\leq k$, $H_m(\tot A)\cong E^2_{m,0}\cong H_m(N)$. Consequently, $H_m(X)\cong H_m(\tot A)\cong H_m(N)$ for all $m\leq k$. Lastly since there is a surjection $E^2_{k+1, 0}\to H_{k+1}(N)$, if $H_{k+1}(N)\neq 0$, then we must also have $E^2_{k+1, 0}\neq 0$. Since the differentials entering and leaving the term $E^r_{k+1,0}$ are zero homomorphisms for all $r\geq 2$, we have $E^\infty_{k+1,0}\cong E^2_{k+1,0}$, which in turn implies that $H_{k+1}(X)\cong H_{k+1}(\tot A)\neq 0$.
\end{proof}

\begin{remark}
The fact that $\scru$ is an open cover is only used for the isomorphism $H^\scru_\bullet(X)\cong H_\bullet(X)$ which is used to establish $H_\bullet(X)\cong H_\bullet(\tot A)$. Hence \Cref{nerve} holds slightly more generally for any space $X$ and cover $\scru$ such that $H^\scru_\bullet(X)\cong H_\bullet(X)$, for example if $\scru$ is a collection of sets whose interiors cover $X$.
\end{remark}

\section*{Acknowledgments}
The author would like to thank Henry Adams for several helpful conversations.

\end{document}